\pgfplotsset{compat=1.15}
\newtheorem{theorem}{Theorem}
\newtheorem{proposition}[theorem]{Proposition}
\newtheorem{example}[theorem]{Example}
\newtheorem{Remark}[theorem]{Remark}
\newtheorem{question}[theorem]{Question}
\newtheorem{observation}[theorem]{Observation}
\numberwithin{theorem}{section} 
\numberwithin{equation}{section} 
\numberwithin{figure}{section} 
\newcounter{question}
\newcommand{\q}{\stepcounter{question}\medskip \noindent\textbf{\thequestion.}\,}
\newcommand*{\R}{\mathbb{R}}
\newcommand{\degre}{\ensuremath{^\circ}}
\title{Circular isoptics in Flatland}
\author{Alexander Thomas}
\address{Universit\'e Lyon 1, 21 av. Claude Bernard, 69100 Villeurbanne}
\email{athomas@math.univ-lyon1.fr}
\begin{document}

\begin{abstract}
We explore convex shapes $S$ in the Euclidean plane which have the following property: there is a circle $C$ such that the angle between the two tangents from any point of $C$ to $S$ is constant equal to $\alpha$. A dynamical formulation allows to analyze the existence of such shapes. Interestingly, the existence of non-circular shapes depends in a non-trivial way on the angle $\alpha$.
\end{abstract}

\maketitle

\section{Flatland}

Imagine how a creature living in Edwin Abbott's \emph{Flatland} sees their world. Suppose it has only one eye, then any connected shape is just a line segment in his view, which can be described by an angle, the \emph{angle of sight}.
A natural question is:

\medskip
\textit{Can a Flatlander determine the shape of an object, by going around it on a circle, just via the angles of sight?}

\begin{figure}[h!]
\centering
\definecolor{uuuuuu}{rgb}{0.26666666666666666,0.26666666666666666,0.26666666666666666}
\definecolor{qqwuqq}{rgb}{0,0.39215686274509803,0}
\definecolor{xdxdff}{rgb}{0.49019607843137253,0.49019607843137253,1}

\begin{tikzpicture}[line cap=round,line join=round,>=triangle 45,x=1cm,y=1cm, scale=0.5]
\draw [shift={(3.594834558253917,3.821002671226993)},line width=1pt,color=qqwuqq,fill=qqwuqq,fill opacity=0.10000000149011612] (0,0) -- (-152.3135167987636:0.8) arc (-152.3135167987636:-97.50632855234342:0.8) -- cycle;
\draw [shift={(5.312616862246874,-1.8556192785936363)},line width=1pt,color=qqwuqq,fill=qqwuqq,fill opacity=0.10000000149011612] (0,0) -- (128.90740035367557:0.8) arc (128.90740035367557:164.68496738114482:0.8) -- cycle;
\draw [shift={(-2.9925169478307003,-1.3579827418512065)},line width=1pt,color=qqwuqq,fill=qqwuqq,fill opacity=0.10000000149011612] (0,0) -- (7.209730191692466:0.8) arc (7.209730191692466:48.922955117216794:0.8) -- cycle;
\draw [line width=1pt,dashed] (1.2572519480519464,0.015418181818185744) circle (4.466180220898789cm);
\draw [rotate around={161.6475452560356:(1.2541550291590398,0.6884256482047278)},line width=1.5pt] (1.2541550291590398,0.6884256482047278) ellipse (1.9281188483158058cm and 1.3799117086939598cm);
\draw [line width=1pt] (-2.9925169478307003,-1.3579827418512065)-- (1.9768798982059703,-0.7293445181236582);
\draw [line width=1pt] (5.312616862246874,-1.8556192785936363)-- (0.9202168828063693,-0.6527558544400027);
\draw [line width=1pt] (5.312616862246874,-1.8556192785936363)-- (2.689346003523837,1.3945768517205106);
\draw [line width=1pt] (3.594834558253917,3.821002671226993)-- (3.0920857683652714,0.005503148835806382);
\draw [line width=1pt] (3.594834558253917,3.821002671226993)-- (-0.009121654744113528,1.9299677949752825);
\draw [line width=1pt] (-2.9925169478307003,-1.3579827418512065)-- (-0.41057165248979777,1.604151560264207);
\draw [fill=xdxdff] (3.594834558253917,3.821002671226993) circle (2.5pt);
\draw[color=uuuuuu] (3.8459965367965347,4.37039653679654) node {$P_3$};
\draw [fill=xdxdff] (-2.9925169478307003,-1.3579827418512065) circle (2.5pt);
\draw[color=uuuuuu] (-3.7,-1.2573090909090867) node {$P_1$};
\draw [fill=xdxdff] (5.312616862246874,-1.8556192785936363) circle (2.5pt);
\draw[color=uuuuuu] (6.2,-1.7) node {$P_2$};
\draw[color=qqwuqq] (2.7,2.6) node {$55^\circ$};
\draw[color=qqwuqq] (3.5,-0.8) node {$36^\circ$};
\draw[color=qqwuqq] (-1.4,-0.6685645021644981) node {$42^\circ$};
\draw[color=uuuuuu] (1.5,0.8) node {$S?$};
\end{tikzpicture}
\caption{Angles of sight}\label{Fig:angle-sight}
\end{figure}
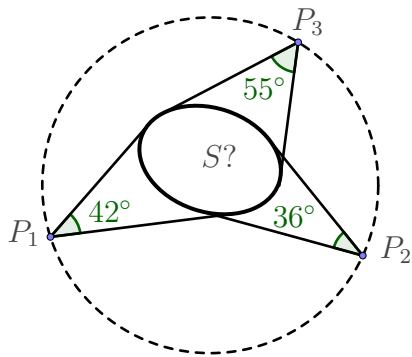

To be more precise, let $S\subset \R^2$ be closed and convex. For a point $P\notin S$, the union of all rays $PQ$ where $Q$ varies in $S$ is a cone centered at $P$. The \emph{angle of sight} at $P$ is defined to be the angle of that cone. It is the angle between the two tangent lines from $P$ to $S$.

Fix a circle $C$ with center $O$ on which the Flatlander walks, while looking at a convex shape $S$ inside the disk bounded by $C$. The question is whether $S$ can be determined by the angles $\alpha(P)$ for $P$ varying on $C$.

An interesting special case is when all angles are constant, equal to a fixed $\alpha\in (0,\pi)$. One possible shape $S$ having constant angle of sight is of course a disk centered at $O$. But is it the only possible shape?

\begin{question}\label{Q1}
Are there ``constant angle shapes'' other than disks?
\end{question}

The question is similar to the question of convex shapes with constant width. Indeed, when the radius of $C$ goes to infinity, the tangent lines from $P\in C$ to $S$ become parallel. It is natural to replace the angle of sight $\alpha$ by the distance of the parallel lines. So the question becomes: are there convex shapes having constant diameter in all directions?

The answer to this is well-known: there is a great variety of shapes with constant shapes, the most famous being the \emph{Reuleaux triangle}, which has minimal area among all shapes with constant and fixed width \cite{blaschke1915konvexe}.

\begin{figure}[h!]
\centering
\definecolor{ffzzqq}{rgb}{1,0.6,0}
\definecolor{ffzztt}{rgb}{1,0.6,0.2}

\begin{tikzpicture}[line cap=round,line join=round,>=triangle 45,x=1cm,y=1cm, scale=0.7]
\fill[line width=2pt,color=ffzztt,fill=ffzztt,fill opacity=1] (-3.028462337662346,-0.746486580086576) -- (1.5083341991341892,-0.746486580086576) -- (-0.7600640692640772,3.1824944725804865) -- cycle;
\draw [line width=2pt,dash pattern=on 1pt off 1pt,color=ffzztt] (1.5083341991341892,-0.746486580086576)-- (-0.7600640692640772,3.1824944725804865);
\draw [line width=2pt,dash pattern=on 1pt off 1pt,color=ffzztt] (-0.7600640692640772,3.1824944725804865)-- (-3.028462337662346,-0.746486580086576);
\draw [shift={(-3.028462337662346,-0.746486580086576)},line width=2pt,color=ffzztt,fill=ffzztt,fill opacity=1]  plot[domain=0:1.0471975511965976,variable=\t]({1*4.536796536796535*cos(\t r)+0*4.536796536796535*sin(\t r)},{0*4.536796536796535*cos(\t r)+1*4.536796536796535*sin(\t r)});
\draw [shift={(-0.7600640692640772,3.1824944725804865)},line width=2pt,color=ffzzqq,fill=ffzzqq,fill opacity=1]  plot[domain=4.1887902047863905:5.235987755982988,variable=\t]({1*4.536796536796536*cos(\t r)+0*4.536796536796536*sin(\t r)},{0*4.536796536796536*cos(\t r)+1*4.536796536796536*sin(\t r)});
\draw [shift={(1.5083341991341892,-0.746486580086576)},line width=2pt,color=ffzzqq,fill=ffzzqq,fill opacity=1]  plot[domain=2.0943951023931957:3.141592653589793,variable=\t]({1*4.536796536796535*cos(\t r)+0*4.536796536796535*sin(\t r)},{0*4.536796536796535*cos(\t r)+1*4.536796536796535*sin(\t r)});
\draw [line width=1pt,dashed] (-3.028462337662346,-0.746486580086576)-- (1.5083341991341892,-0.746486580086576);
\draw [line width=1pt,dashed] (1.5083341991341892,-0.746486580086576)-- (-0.7600640692640772,3.1824944725804865);
\draw [line width=1pt,dashed] (-0.7600640692640772,3.1824944725804865)-- (-3.028462337662346,-0.746486580086576);
\end{tikzpicture}
\caption{Reuleaux triangle}\label{Fig:Reuleaux}
\end{figure}
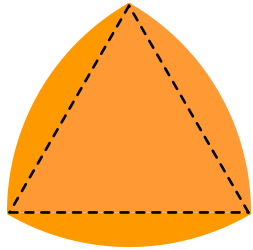

So it might not come as a surprise that there are non-trivial examples of constant angle shapes. Take for instance an ellipse $E$ and consider the set of points such that the two tangent lines to $E$ are perpendicular. This set is a circle, well-known as the \emph{orthoptic circle} \cite[Prop. 7.21]{sidler}. The same example works for any conic other than the parabola (where the orthoptic circle degenerates into a straight line).

\begin{figure}[h!]
\centering
\definecolor{uuuuuu}{rgb}{0.26666666666666666,0.26666666666666666,0.26666666666666666}
\definecolor{qqwuqq}{rgb}{0,0.39215686274509803,0}
\definecolor{xdxdff}{rgb}{0.49019607843137253,0.49019607843137253,1}

\begin{tikzpicture}[line cap=round,line join=round,>=triangle 45,x=1cm,y=1cm, scale=0.55]
\draw[line width=1.5pt,color=qqwuqq,fill=qqwuqq,fill opacity=0.10000000149011612] (0.47138288910134535,4.163277640349452) -- (0.755352308376035,3.9302725460346046) -- (0.9883574026908816,4.214241965309294) -- (0.7043879834161919,4.447247059624141) -- cycle; 
\draw [rotate around={-2.915322116642519:(1.2572519480519466,0.015418181818185746)},line width=1.5pt] (1.2572519480519466,0.015418181818185746) ellipse (3.424413949238913cm and 2.867081245762928cm);
\draw [line width=1.5pt,dashed] (1.2572519480519464,0.015418181818185744) circle (4.466180220898789cm);
\draw [line width=1.5pt] (-0.2778882282645647,5.2532330067394835)-- (6.63080205588211,-0.41554652368067213);
\draw [line width=1.5pt] (1.597424527534914,5.535614116877406)-- (-4.286905742078956,-1.6357735061967986);
\draw [fill=xdxdff] (0.7043879834161919,4.44724705962414) circle (2.5pt);
\draw[color=uuuuuu] (0.6858233766233759,4.959141125541128) node {$P$};
\draw[color=uuuuuu] (0,-2.1) node {$E$};
\draw[color=qqwuqq] (0.85,3.4) node {$90^\circ$};
\end{tikzpicture}
\caption{Orthoptic circle}\label{Fig:orthoptic}
\end{figure}
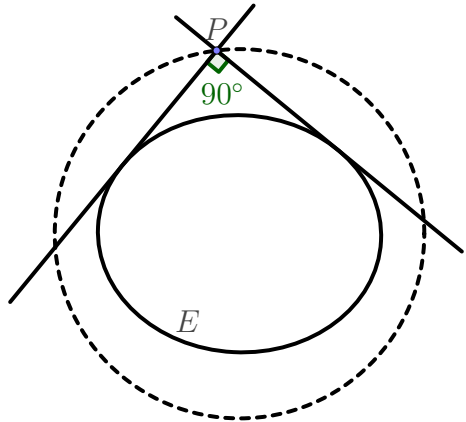

The true surprise for me was that the classification of constant angle shapes depends in an unexpected way on the angle $\alpha$. In the example of the conics above we have $\alpha=\tfrac{\pi}{2}$. If $\alpha$ is an irrational multiple of $\pi$, then there is only the disk. But for some other rational values, for example for $\alpha=\tfrac{\pi}{3}$, there still is only the disk!

A reformulation of the problem is the following: for fixed convex shape $S$ and angle $\alpha\in (0,\pi)$, consider the set of points $P$ in the plane such that the angle of sight at $P$ to $S$ is $\alpha$. This forms a simply closed curve $S_\alpha$ called an \emph{isoptics}. Varying $\alpha\in (0,\pi)$ gives a foliation of $\mathbb{R}^2\backslash S$ by the curves $S_\alpha$. The question then becomes: 

\begin{question} 
For which convex compact shapes there is a circle among the isoptics? For which $\alpha$ can this appear?
\end{question}

The complete answer to the existence of constant angle shapes is given by the following:

\begin{theorem}[Green 1949]\label{main-thm}
There exist shapes of constant angle $\alpha$ other than the disk if and only if $\alpha=\tfrac{p}{q}\pi$, where $(p,q)$ are two coprime intergers with $p<q$ and $q-p$ is odd. In that case, there are uncountably many examples.
\end{theorem}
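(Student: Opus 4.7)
The plan is to reformulate the problem as a functional equation for the support function $h$ of $S$ and analyze it via Fourier series. Place $C$ at the origin with radius $R$ and write $\phi := \pi - \alpha$. A point of $S_\alpha$ is the intersection of two tangent lines to $S$ whose outward normals differ by $\phi$, and a direct computation of this intersection yields $|OP|^2 = \sin^{-2}\phi\,[h(\theta)^2 + h(\theta+\phi)^2 - 2h(\theta)h(\theta+\phi)\cos\phi]$. Consequently, $|OP|=R$ for every $\theta$ becomes
\[ h(\theta)^2 + h(\theta+\phi)^2 - 2h(\theta)h(\theta+\phi)\cos\phi = R^2\sin^2\phi, \qquad (\star) \]
so the theorem reduces to classifying $2\pi$-periodic $h$ satisfying $(\star)$ and $h+h''\ge 0$ (convexity), up to the trivial solution $h \equiv R\cos(\phi/2)$ (the disk).

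The key rewriting is that the left side of $(\star)$ equals $|h(\theta) - h(\theta+\phi)e^{i\phi}|^2$. Introducing a real phase $\psi(\theta)$ via $h(\theta) - h(\theta+\phi)e^{i\phi} = R\sin\phi\cdot e^{i\psi(\theta)}$ and separating real and imaginary parts gives $h(\theta)=R\sin(\phi-\psi(\theta))$ and $h(\theta+\phi) = -R\sin\psi(\theta)$. Equating the two expressions for $h$ (substitute $\theta\mapsto\theta-\phi$ in the second) produces $\sin(\phi-\psi(\theta))=\sin(-\psi(\theta-\phi))$, which splits into two branches. The non-degenerate branch, after the change of variable $\chi(\theta):=\psi(\theta)+\tfrac{\pi-\phi}{2}$, becomes the linear anti-periodicity equation $\chi(\theta+\phi)=-\chi(\theta)$, with $h(\theta)=R\cos(\phi/2-\chi(\theta))$. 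The other, \emph{degenerate}, branch yields $\psi(\theta)=\psi(\theta-\phi)+\phi$ and hence $h(\theta)=R\sin(\phi-\theta-g(\theta))$ for some $g$ that is both $\phi$- and $2\pi$-periodic.

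Fourier-expanding $\chi(\theta)=\sum_n \hat\chi_n e^{in\theta}$, the anti-periodicity forces $\hat\chi_n=0$ unless $e^{in\phi}=-1$. Writing $\phi=(q-p)\pi/q$ in lowest terms, this becomes the congruence $n(q-p)\equiv q \pmod{2q}$. If $\phi/\pi$ is irrational there is no integer solution; if $q-p$ is even, then $\gcd(p,q)=1$ forces both $p$ and $q$ odd, so the left side is even while the right is odd, and again there is no solution; only when $q-p$ is odd do we have $\gcd(q-p,2q)=1$ and a unique solution $n\equiv q \pmod{2q}$, so the admissible Fourier modes of $\chi$ are exactly the odd multiples of $q$. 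To produce shapes, take any real $\chi$ in this Fourier span, of sufficiently small amplitude that $h=R\cos(\phi/2-\chi)$ is convex: for the simplest case $\chi=\epsilon\cos(q\theta+\delta)$, the linearisation gives $h+h''\approx R\cos(\phi/2)-R\sin(\phi/2)\epsilon(q^2-1)\cos(q\theta+\delta)$, so $|\epsilon|\le\cot(\phi/2)/(q^2-1)$ suffices. The two-parameter family $(\epsilon,\delta)$ already yields uncountably many non-circular examples when $q-p$ is odd.

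The main obstacle I anticipate is the dismissal of the degenerate branch. There $h(\theta)=R\sin(\phi-\theta)\cos g(\theta) - R\cos(\phi-\theta)\sin g(\theta)$, where $g$ has Fourier support inside $2q\mathbb{Z}$ (being both $\phi$- and $2\pi$-periodic), so the products have Fourier support inside $\pm 1 + 2q\mathbb{Z}$, which never contains $0$. Hence $\hat h_0=0$, whence $\int_0^{2\pi}(h+h'')\,d\theta = 2\pi\hat h_0 = 0$; combined with $h+h''\ge 0$ this forces $h+h''\equiv 0$. The solutions of $h+h''=0$ are exactly $h=A\cos\theta+B\sin\theta$, the support functions of single points, so the degenerate branch yields no proper convex body. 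A secondary point is to justify placing $O$ at the centre of $C$ under the sign convention $h>0$, which reduces to showing that the centre of an isoptic circle of angle $\alpha\in(0,\pi)$ lies in the interior of the corresponding convex body.
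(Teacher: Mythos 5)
Your proposal is correct in substance, but it takes a genuinely different route from the paper: you have essentially reconstructed Green's original support-function proof, which the paper cites but deliberately does not reproduce, whereas the paper argues via the dynamical system $T:(P_k,\beta_k)\mapsto(P_{k+1},\beta_{k+1})$ on the circle, the $2$-periodicity of $\beta_k$, and the fact that $T^2$ is the rotation by $2\alpha$. The case analyses match up precisely: your congruence $e^{in\phi}=-1$, i.e.\ $n(q-p)\equiv q \pmod{2q}$, is the Fourier-side counterpart of the paper's parity argument that for $q-p$ even the chords $P_kP_{k+1}$ and $P_{k+q}P_{k+q+1}$ are parallel tangents on the same side of $O$ and must coincide; and your admissible modes (odd multiples of $q$) correspond exactly to the paper's example $\theta(t)=k\sin(qt)$, whose relation $\theta(t+\pi/q)=-\theta(t)$ is your anti-periodicity $\chi(\theta+\phi)=-\chi(\theta)$. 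What your route buys: a uniform treatment of all three cases, explicit analytic examples, and --- importantly --- an actual proof of convexity of the perturbed shapes via the bound on $h+h''$, which is precisely the step the paper admits without proof (``We admit this result which can be proven using support functions''). What the paper's route buys: it is completely elementary (inscribed angle theorem, density of irrational rotations) and its $\mathcal{C}^2$-perturbations supported on a single arc give the uncountable family with essentially no computation.

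Two steps in your argument need patching, though both are fixable in a few lines. First, branch globality: $\sin(\phi-\psi(\theta))=\sin(-\psi(\theta-\phi))$ only forces, for \emph{each} $\theta$, one of the two branches, and a priori the solution could switch branches along the circle, in which case neither of your two Fourier arguments applies as stated. The fix: the two branches hold simultaneously at $\theta$ only if $2\psi(\theta-\phi)\equiv-\pi \pmod{2\pi}$, i.e.\ $h=\mp R$ there; since $S$ is compact inside the open disk bounded by $C$, one has $|h|\le R-d<R$, so the two (closed) branch sets are disjoint, and connectedness of the circle forces a single branch globally. Second, periodicity of the lift: the continuous phase satisfies $\psi(\theta+2\pi)=\psi(\theta)+2\pi m$ for some winding number $m$, and your Fourier expansion of $\chi$ needs $m=0$ while the $2\pi$-periodicity you assert for $g$ needs $m=1$; both follow (on the anti-periodic branch $\psi(\theta)+\psi(\theta-\phi)$ is constant, forcing $m=0$; on the degenerate branch $g=\psi-\theta$ is continuous and $\phi$-periodic, hence bounded, and $g(\theta+2\pi)=g(\theta)+2\pi(m-1)$ forces $m=1$), but this should be said. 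Two cosmetic points: when $q-p$ is even the period lattice of $g$ is generated by $2\pi/q$, so its Fourier support is $q\mathbb{Z}$ rather than $2q\mathbb{Z}$ --- harmless, since $0\notin\pm1+q\mathbb{Z}$ for $q\ge 2$; and the convexity threshold $|\epsilon|\le\cot(\phi/2)/(q^2-1)$ comes from a linearisation, so it should be phrased as ``all sufficiently small $\epsilon$'' unless you compute $h+h''$ exactly.
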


I will present an elementary proof combining geometry and dynamical systems in Section \ref{Sec:dyn-viewpt}. Later I found out that the result was obtained a long time ago by Green \cite{green}, using the support function and some simple analysis. The dynamical viewpoint leads to some interesting billiards-like systems, which we quickly discuss in Section \ref{Sec:strange-billiards}.

This problem of constant angle shapes has been asked at a relatively new mathematical tournament, the ETEAM (for \emph{European Tournament of Enthousiastic Apprentice Mathematicians}). The main difference to classical maths olympiads is that ETEAM tries to simulate research. It is a team competition, where high school students have several months to think about (partially) open problems. I'll say more about that in the last section \ref{Sec:opening}.

\smallskip
\textit{Acknowledgements.} I'm grateful to Peter Smillie for stimulating discussions and to the referee for helpful comments.

\section{Dynamical viewpoint}\label{Sec:dyn-viewpt}

Here is another reformulation of the problem using a dynamical system.
Start from a point $P_1$ on the circle $C$ and choose one of the two tangents $t_1$ of $P_1$ to $S$. Then, $t_1$ intersects $C$ in a unique point $P_2$ different from $P_1$. Let $t_2$ be the second tangent from $P_2$ to $S$ (the other one being $t_1$), see Figure \ref{Fig:basics}. The dynamical system is then $(P_1,t_1)\mapsto (P_2,t_2)$.

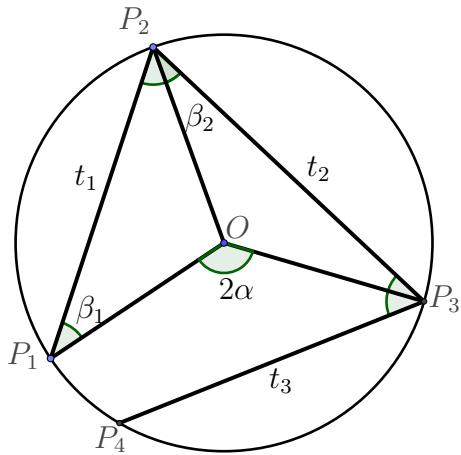
\begin{figure}[h!]
\definecolor{uuuuuu}{rgb}{0.26666666666666666,0.26666666666666666,0.26666666666666666}
\definecolor{qqwuqq}{rgb}{0,0.39215686274509803,0}
\definecolor{xdxdff}{rgb}{0.49019607843137253,0.49019607843137253,1}
\definecolor{ududff}{rgb}{0.30196078431372547,0.30196078431372547,1}

\begin{tikzpicture}[line cap=round,line join=round,>=triangle 45,x=1cm,y=1cm, scale=0.5]
\draw [shift={(-3.7543361475714274,5.631993599665648)},line width=1pt,color=qqwuqq,fill=qqwuqq,fill opacity=0.10000000149011612] (0,0) -- (-108.21653013970847:1) arc (-108.21653013970847:-43.21653013970846:1) -- cycle;
\draw [shift={(3.449130179005303,-1.1364235088159713)},line width=1pt,color=qqwuqq,fill=qqwuqq,fill opacity=0.10000000149011612] (0,0) -- (136.78346986029158:1) arc (136.78346986029158:201.78346986029158:1) -- cycle;
\draw [shift={(-6.48136055344257,-2.654240368961713)},line width=1pt,color=qqwuqq,fill=qqwuqq,fill opacity=0.10000000149011612] (0,0) -- (33.690067525979785:1) arc (33.690067525979785:71.78346986029156:1) -- cycle;
\draw [shift={(-3.7543361475714274,5.631993599665648)},line width=1pt,color=qqwuqq,fill=qqwuqq,fill opacity=0.10000000149011612] (0,0) -- (-70.12312780539669:1) arc (-70.12312780539669:-43.21653013970845:1) -- cycle;
\draw [line width=1pt] (-1.87,0.42) circle (5.5421656416963945cm);
\draw [line width=1.5pt] (-6.48136055344257,-2.654240368961713)-- (-3.7543361475714274,5.631993599665648);
\draw [line width=1.5pt] (-3.7543361475714274,5.631993599665648)-- (-1.87,0.42);
\draw [line width=1.5pt] (-1.87,0.42)-- (-6.48136055344257,-2.654240368961713);
\draw [line width=1.5pt] (-1.87,0.42)-- (3.449130179005303,-1.1364235088159713);
\draw [line width=1.5pt] (-3.7543361475714274,5.631993599665648)-- (3.449130179005303,-1.1364235088159713);
\draw [line width=1.5pt] (3.449130179005303,-1.1364235088159713)-- (-4.651390806452168,-4.3736901424459385);
\draw [shift={(-1.87,0.42)},line width=1pt,color=qqwuqq,fill=qqwuqq,fill opacity=0.10000000149011612] (0,0) -- (-146.30993247402023:0.8) arc (-146.30993247402023:-16.309932474020215:0.8) -- cycle;

\draw (-6.1,-0.7) node[anchor=north west] {$\beta_1$};
\draw (-3.2,4.4) node[anchor=north west] {$\beta_2$};
\draw (-2.3,-0.25) node[anchor=north west] {$2\alpha$};
\draw [fill=ududff] (-1.87,0.42) circle (2.5pt);
\draw[color=uuuuuu] (-1.5,0.85) node {$O$};
\draw [fill=xdxdff] (-6.48136055344257,-2.654240368961713) circle (2.5pt);
\draw[color=uuuuuu] (-7.21,-2.57) node {$P_1$};
\draw [fill=xdxdff] (-3.7543361475714274,5.631993599665648) circle (2.5pt);
\draw[color=uuuuuu] (-4.25,6.33) node {$P_2$};
\draw[color=black] (-5.53,2.15) node {$t_1$};
\draw [fill=uuuuuu] (3.449130179005303,-1.1364235088159713) circle (2pt);
\draw[color=uuuuuu] (4,-1.07) node {$P_3$};
\draw [fill=uuuuuu] (-4.651390806452168,-4.3736901424459385) circle (2pt);
\draw[color=uuuuuu] (-4.89,-4.8) node {$P_4$};
\draw[color=black] (0.65,2.39) node {$t_2$};
\draw[color=black] (-0.35,-3.25) node {$t_3$};
\end{tikzpicture}

\caption{Dynamical viewpoint}\label{Fig:basics}
\end{figure}

This system is quite similar to a billiards, but with the strange rule that the rebound at the boundary always happens at a constant angle $\alpha$.
The trajectories of this billiards-like system are all tangent to $S$. We will recover $S$ from the set of all its tangents.

Instead of considering $(P_1,t_1)$, denote by $\beta_k$ the angle between $t_k$ and $OP_k$, where $O$ is the center of the circle $C$ and $k\in\mathbb{N}$. The dynamical system $T$ is then $T:(P_k,\beta_k)\mapsto (P_{k+1},\beta_{k+1})$.

Some very elementary geometric observations will help us significantly:

\begin{observation}\label{obs-2-periodic}
The sequence $(\beta_k)$ is 2-periodic, i.e. $\beta_{k+2}=\beta_k$ for all $k\geq 1$.
\end{observation}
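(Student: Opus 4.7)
The plan rests on two elementary ingredients: the isoceles triangle $OP_kP_{k+1}$ (both legs equal the radius $R$ of $C$), and the fact that at $P_{k+1}$ the two tangents from $P_{k+1}$ to $S$ meet at the constant angle-of-sight $\alpha$. The strategy is to extract from these a single local relation linking $\beta_k$ and $\beta_{k+1}$ at $P_{k+1}$, which is symmetric in $k$ and $k+1$ and therefore forces $\beta_{k+2}=\beta_k$ once the relation is invoked twice in succession.

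First I would exploit the isoceles triangle $OP_kP_{k+1}$: its base angles at $P_k$ and at $P_{k+1}$ coincide. By definition the base angle at $P_k$ is $\beta_k$, since $OP_k$ is one leg and the base $P_kP_{k+1}$ lies along $t_k$. Hence the base angle at $P_{k+1}$, namely the angle between $OP_{k+1}$ and the ray from $P_{k+1}$ back along $t_k$ to $P_k$, also equals $\beta_k$. Next I would use the constant-angle hypothesis at $P_{k+1}$: the two tangents $t_k$ and $t_{k+1}$ from $P_{k+1}$ to $S$ bound an angle $\alpha$. In the generic configuration where the ray $P_{k+1}O$ lies inside this tangent cone (the picture case of Figure \ref{Fig:basics}), the segment $OP_{k+1}$ splits $\alpha$ into the piece $\beta_k$ (between $t_k$ and $OP_{k+1}$) and the piece $\beta_{k+1}$ (between $OP_{k+1}$ and $t_{k+1}$, by definition), yielding the additive relation $\beta_k+\beta_{k+1}=\alpha$. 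Applying exactly the same reasoning at the next vertex gives $\beta_{k+1}+\beta_{k+2}=\alpha$, and subtracting the two identities yields $\beta_{k+2}=\beta_k$, which is the desired 2-periodicity.

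The one point that demands attention, and the main (if modest) obstacle, is the orientation book-keeping: if at some $P_{k+1}$ the centre $O$ happens to lie outside the tangent cone, the naive additive identity has to be replaced by a signed or absolute-value variant. The clean remedy is to work throughout with oriented angles modulo $\pi$ between oriented lines. With that convention the local relation at $P_{k+1}$ becomes the same involution on the angle variable for every $k$, and composing this involution with itself from $P_{k+1}$ to $P_{k+2}$ returns $\beta_k$ irrespective of the location of $O$ relative to the tangent cone, completing the proof.
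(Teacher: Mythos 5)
Your proof is correct and takes essentially the same route as the paper: the isosceles triangle $OP_kP_{k+1}$ together with the constant rebound angle yields $\beta_k+\beta_{k+1}=\alpha$, and applying this at consecutive vertices gives $\beta_{k+2}=\beta_k$. Your extra bookkeeping for the case where $O$ falls outside the tangent cone is a refinement the paper leaves implicit, reading the additive relation directly off Figure~\ref{Fig:basics}.
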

\begin{proof}
Consider three consecutive points $P_{k}, P_{k+1}$ and $P_{k+2}$ (see Figure \ref{Fig:basics}). Since the rebounce has constant angle $\alpha$ and triangle $P_kP_{k+1}O$ is isosceles, we get $\beta_k+\beta_{k+1}=\alpha$. This is true for all $k$, thus $\beta_k=\beta_{k+2}$.
\end{proof}

As a consequence, we see that the distance between $O$ and $P_kP_{k+1}$ is 2-periodic, since it equals $r\sin(\beta_k)$, where $r$ is the radius of $C$.

\begin{observation}
The second iteration $T^2$ of the dynamical system is a rotation around $O$ by angle $2\alpha$.
\end{observation}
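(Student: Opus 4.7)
The plan is to decompose the state $(P_k,\beta_k)$ and handle its two components separately. Observation~\ref{obs-2-periodic} already gives $\beta_{k+2}=\beta_k$, so the angle coordinate is preserved by $T^2$; what remains is to show that the map $P_k\mapsto P_{k+2}$ is a rotation around $O$ by a fixed angle of magnitude $2\alpha$.

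The key step is the central angle at $O$ for a single iteration. The triangle $OP_kP_{k+1}$ has $|OP_k|=|OP_{k+1}|=r$ and its base $P_kP_{k+1}$ lies along the tangent $t_k$; hence it is isosceles and both of its base angles equal $\beta_k$ (the one at $P_k$ by definition of $\beta_k$, the one at $P_{k+1}$ by symmetry). This forces the apex angle to be
\[
\angle P_kOP_{k+1}=\pi-2\beta_k.
\]
Writing the analogous equality at the next step and invoking the relation $\beta_k+\beta_{k+1}=\alpha$ already established in the proof of Observation~\ref{obs-2-periodic}, I would then add the two to obtain
\[
\angle P_kOP_{k+2}=(\pi-2\beta_k)+(\pi-2\beta_{k+1})=2\pi-2\alpha\equiv-2\alpha\pmod{2\pi},
\]
which is a rotation of magnitude $2\alpha$. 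Together with $\beta_{k+2}=\beta_k$, this identifies $T^2$ with the rotation of the state space by angle $2\alpha$ around $O$.

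The one subtlety I would be careful about is orientation. Each of the central angles $\angle P_kOP_{k+1}$ and $\angle P_{k+1}OP_{k+2}$ has a sign depending on which side of $OP_{k+1}$ the tangent $t_{k+1}$ lies, and the rule ``pick the other tangent at $P_{k+1}$'' is exactly what forces the two single-step angular displacements to occur in the same sense along $C$, so that they add up (rather than cancel) to $2\pi-2\alpha$. Once this is checked, the argument reduces to two lines of elementary triangle geometry plus the previous observation, and no further case analysis is required.
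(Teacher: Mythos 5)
Your proof is correct, but it takes a different (more elementary) route than the paper, whose entire proof is one line: the observation ``simply follows from the first observation and the inscribed angle theorem.'' The paper's intended argument applies the inscribed angle theorem directly to the angle of sight: the angle $\alpha$ at $P_{k+1}$ between the chords $P_{k+1}P_k$ and $P_{k+1}P_{k+2}$ is an inscribed angle subtending the arc $P_kP_{k+2}$ not containing $P_{k+1}$, so the central angle $\angle P_kOP_{k+2}$ equals $2\alpha$ at once, and Observation \ref{obs-2-periodic} supplies the invariance of the $\beta$-coordinate. You instead sum the apex angles $\pi-2\beta_k$ and $\pi-2\beta_{k+1}$ of the two isosceles triangles $OP_kP_{k+1}$ and $OP_{k+1}P_{k+2}$ and use $\beta_k+\beta_{k+1}=\alpha$; this in effect reproves the relevant instance of the inscribed angle theorem, and your total $2\pi-2\alpha \equiv -2\alpha$ is the same rotation measured the ``long way around'' through $P_{k+1}$, i.e.\ of magnitude $2\alpha$ as the statement asserts (the paper does not fix an orientation convention). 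What each approach buys: the paper's is a genuine one-liner, and because the inscribed angle theorem compares $P_k$ and $P_{k+2}$ directly via the subtended arc, it silently absorbs the orientation issue you rightly flag; your version is self-contained (no citation of the inscribed angle theorem) but then must, as you note, verify that the two single-step angular displacements have the same sense --- which does hold here, since the tangency points of the chords to $S$ force the trajectory to wind consistently around $S$, so your check is the honest price of the more elementary argument rather than a gap.
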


This simply follows from the first observation and the inscribed angle theorem. We are now ready to discuss the first case of the Theorem \ref{main-thm}:

\begin{proposition}
If $\alpha$ is an irrational multiple of $\pi$, then the only convexe shape with constant angle $\alpha$ is the disk.
\end{proposition}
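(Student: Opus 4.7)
The plan is to combine the two observations with Kronecker's density theorem for irrational rotations to produce a dense family of supporting lines of $S$ at a common distance from $O$, and then to deduce from this via the support function that $S$ must be a round disk.

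Fix a starting configuration $(P_1,t_1)$ with associated angle $\beta_1$. By Observation~\ref{obs-2-periodic}, for every $k\geq 0$ the odd-indexed tangent line $t_{2k+1}$ lies at distance $r\sin(\beta_1)$ from $O$. By the second observation, $T^2$ is the rotation $R$ of angle $2\alpha$ about $O$, so $t_{2k+1}=R^k(t_1)$; consequently all these lines are tangent to the circle $C'$ of radius $r\sin(\beta_1)$ centered at $O$, and their tangency points with $C'$ form an orbit which is dense in $C'$ by Kronecker's theorem, since $\alpha\notin\pi\Q$. Writing $u_{2k+1}=R^k u_1$ for the outward unit normal to $t_{2k+1}$, the set $\{u_{2k+1}\}_{k\geq 0}$ is dense in $S^1$; and since $t_{2k+1}$ is a supporting line of $S$ whose signed distance from $O$ equals the support function value $h_S(u_{2k+1})=\sup_{x\in S}\langle x,u_{2k+1}\rangle$ and whose unsigned distance is $r\sin(\beta_1)$, we obtain $h_S(u_{2k+1})=\pm r\sin(\beta_1)$ for every $k$.

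Since $S$ is compact and convex, $h_S\colon S^1\to\R$ is continuous (indeed Lipschitz), so density of $\{u_{2k+1}\}$ forces $h_S(S^1)\subseteq\{\pm r\sin(\beta_1)\}$; connectedness of $S^1$ then forces $h_S$ to be constant, and $h_S\equiv -r\sin(\beta_1)$ would give $S=\emptyset$, leaving $h_S\equiv r\sin(\beta_1)$. This means exactly that $S$ is the closed disk of radius $r\sin(\beta_1)$ centered at $O$. The only subtle point in this argument is that $O$ may a priori lie on either side of a given tangent $t_{2k+1}$, making the sign of $h_S$ on the dense orbit potentially nonconstant; this is cleanly handled by the continuity/connectedness step above, after which the proposition follows directly from the rotational structure of $T^2$.
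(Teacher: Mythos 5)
Your proof is correct and takes essentially the same route as the paper: both rest on Observation \ref{obs-2-periodic} and the fact that $T^2$ is the irrational rotation by $2\alpha$, yielding a dense family of supporting lines of $S$ at the fixed distance $r\sin(\beta_1)$ from $O$, at which point the paper simply concludes that these lines ``envelop a disk.'' Your support-function continuity/connectedness step merely makes that final envelope assertion rigorous; the one thing you assert without proof --- that $u_{2k+1}=R^k u_1$ is the \emph{outward} normal, i.e.\ that $S$ stays on a consistent side of the rotating chords (which is what makes the set of outward normals, and not just the set of unoriented lines, dense) --- is true because the trajectory circulates around $S$ with consistent orientation, and is glossed over at the same level in the paper's own one-line proof.
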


\begin{proof}
The iterates $T^{2n}$ applied to $P_1P_2$ form a set of straight lines, equally distant from $O$, whose slopes are a dense subset of $\mathbb{R}$. Therefore, they envelop a disk.
\end{proof}

Now, we turn to the case where $\alpha$ is a rational multiple of $\pi$. Then the dynamical system becomes periodic. But not all rational multiples will give non-trivial examples. 

Consider for instance $\alpha=\pi/3$ (see left of Figure \ref{Fig:periodic-trajectories}). We see that $P_kP_{k+1}$ is parallel to $P_{k+3}P_{k+4}$ for all $k$ and both lie on the same side of $O$, independently of the choice of $\beta_1$. Since $S$ has to be tangent to all these lines, the only possibility is $P_kP_{k+1}=P_{k+3}P_{k+4}$, which implies $\beta_k=\beta_{k+1}=\alpha/2$. This argument applies to all points on $C$, so $\beta(P)=\alpha/2$ for all $P\in C$. By rotational symmetry, $S$ has to be a disk then.

The generalization of this argument leads to the following:

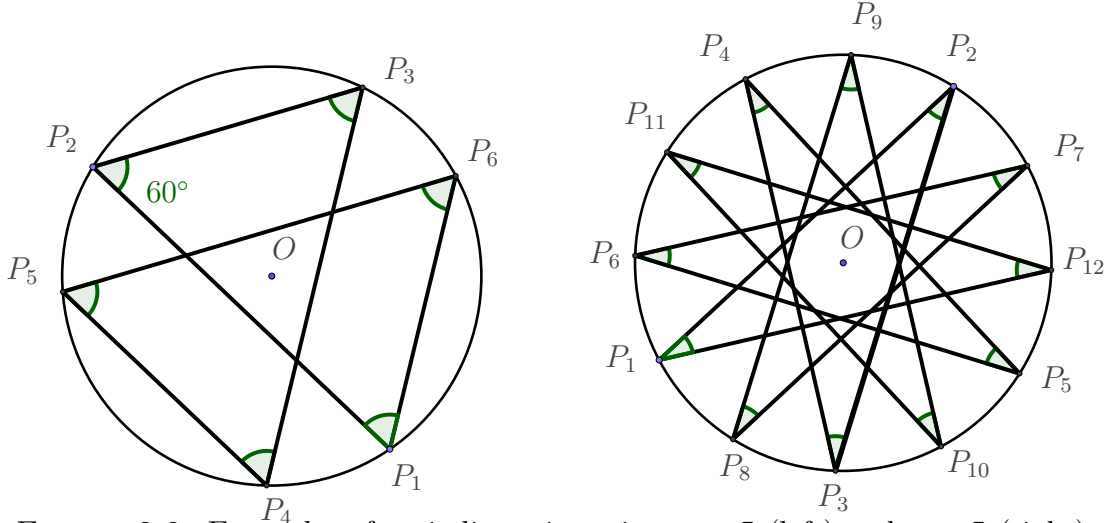
\begin{figure}
\definecolor{uuuuuu}{rgb}{0.26666666666666666,0.26666666666666666,0.26666666666666666}
\definecolor{qqwuqq}{rgb}{0,0.39215686274509803,0}
\definecolor{xdxdff}{rgb}{0.49019607843137253,0.49019607843137253,1}
\definecolor{ududff}{rgb}{0.30196078431372547,0.30196078431372547,1}

\begin{tikzpicture}[line cap=round,line join=round,>=triangle 45,x=1cm,y=1cm, scale=0.45]
\draw [shift={(1.9490665961736235,3.357517122320277)},line width=1.5pt,color=qqwuqq,fill=qqwuqq,fill opacity=0.10000000149011612] (0,0) -- (-137.07156017018377:1.0230354110004332) arc (-137.07156017018377:-107.07156017018379:1.0230354110004332) -- cycle;
\draw [shift={(-1.5410663309166677,-8.007406636193567)},line width=1.5pt,color=qqwuqq,fill=qqwuqq,fill opacity=0.10000000149011612] (0,0) -- (72.92843982981624:1.0230354110004332) arc (72.92843982981624:102.92843982981623:1.0230354110004332) -- cycle;
\draw [shift={(-4.198969074522828,3.571193026071778)},line width=1.5pt,color=qqwuqq,fill=qqwuqq,fill opacity=0.10000000149011612] (0,0) -- (-77.07156017018377:1.0230354110004332) arc (-77.07156017018377:-47.07156017018378:1.0230354110004332) -- cycle;
\draw [shift={(3.8982771488783365,-5.133812630629879)},line width=1.5pt,color=qqwuqq,fill=qqwuqq,fill opacity=0.10000000149011612] (0,0) -- (132.92843982981623:1.0230354110004332) arc (132.92843982981623:162.9284398298162:1.0230354110004332) -- cycle;
\draw [shift={(-7.458035670696452,-1.6463240962484975)},line width=1.5pt,color=qqwuqq,fill=qqwuqq,fill opacity=0.10000000149011612] (0,0) -- (-17.071560170183783:1.0230354110004332) arc (-17.071560170183783:12.92843982981621:1.0230354110004332) -- cycle;
\draw [shift={(4.129343479795006,1.0135940055636865)},line width=1.5pt,color=qqwuqq,fill=qqwuqq,fill opacity=0.10000000149011612] (0,0) -- (-167.0715601701838:1.0230354110004332) arc (-167.0715601701838:-137.07156017018377:1.0230354110004332) -- cycle;
\draw [shift={(-4.569066596173626,-7.077517122320275)},line width=1.5pt,color=qqwuqq,fill=qqwuqq,fill opacity=0.10000000149011612] (0,0) -- (42.928439829816206:1.0230354110004332) arc (42.928439829816206:72.9284398298162:1.0230354110004332) -- cycle;
\draw [shift={(-1.0789336690833293,4.287406636193568)},line width=1.5pt,color=qqwuqq,fill=qqwuqq,fill opacity=0.10000000149011612] (0,0) -- (-107.0715601701838:1.0230354110004332) arc (-107.0715601701838:-77.07156017018382:1.0230354110004332) -- cycle;
\draw [shift={(1.5789690745228253,-7.291193026071779)},line width=1.5pt,color=qqwuqq,fill=qqwuqq,fill opacity=0.10000000149011612] (0,0) -- (102.9284398298162:1.0230354110004332) arc (102.9284398298162:132.9284398298162:1.0230354110004332) -- cycle;
\draw [shift={(-6.518277148878337,1.413812630629883)},line width=1.5pt,color=qqwuqq,fill=qqwuqq,fill opacity=0.10000000149011612] (0,0) -- (-47.07156017018381:1.0230354110004332) arc (-47.07156017018381:-17.071560170183815:1.0230354110004332) -- cycle;
\draw [shift={(4.838035670696454,-2.0736759037515045)},line width=1.5pt,color=qqwuqq,fill=qqwuqq,fill opacity=0.10000000149011612] (0,0) -- (162.9284398298162:1.0230354110004332) arc (162.9284398298162:192.92843982981617:1.0230354110004332) -- cycle;
\draw [shift={(-23.478089222303865,0.9749572498001466)},line width=1.5pt,color=qqwuqq,fill=qqwuqq,fill opacity=0.10000000149011612] (0,0) -- (-43.56475096667924:1.0230354110004332) arc (-43.56475096667924:16.435249033320737:1.0230354110004332) -- cycle;
\draw [shift={(-15.506770161650943,3.326373845392565)},line width=1.5pt,color=qqwuqq,fill=qqwuqq,fill opacity=0.10000000149011612] (0,0) -- (-163.56475096667924:1.0230354110004332) arc (-163.56475096667924:-103.56475096667927:1.0230354110004332) -- cycle;
\draw [shift={(-18.34654432770126,-8.443523348852539)},line width=1.5pt,color=qqwuqq,fill=qqwuqq,fill opacity=0.10000000149011612] (0,0) -- (76.43524903332073:1.0230354110004332) arc (76.43524903332073:136.43524903332073:1.0230354110004332) -- cycle;
\draw [shift={(-24.368590364691084,-2.7158668384521967)},line width=1.5pt,color=qqwuqq,fill=qqwuqq,fill opacity=0.10000000149011612] (0,0) -- (-43.56475096667927:1.0230354110004332) arc (-43.56475096667927:16.435249033320726:1.0230354110004332) -- cycle;
\draw [shift={(-12.755673311518471,0.7097651898600121)},line width=1.5pt,color=qqwuqq,fill=qqwuqq,fill opacity=0.10000000149011612] (0,0) -- (-163.56475096667927:1.0230354110004332) arc (-163.56475096667927:-103.56475096667928:1.0230354110004332) -- cycle;
\draw [line width=1pt] (-1.31,-1.86) circle (6.151747719144536cm);
\draw [line width=1.5pt] (-6.749343479795005,-4.733594005563689)-- (1.9490665961736235,3.357517122320277);
\draw [line width=2pt] (1.9490665961736235,3.357517122320277)-- (-1.5410663309166677,-8.007406636193567);
\draw [line width=1.5pt] (-1.5410663309166677,-8.007406636193567)-- (-4.198969074522828,3.571193026071778);
\draw [line width=1.5pt] (-4.198969074522828,3.571193026071778)-- (3.8982771488783365,-5.133812630629879);
\draw [line width=1.5pt] (3.8982771488783365,-5.133812630629879)-- (-7.458035670696452,-1.6463240962484975);
\draw [line width=1.5pt] (-7.458035670696452,-1.6463240962484975)-- (4.129343479795006,1.0135940055636865);
\draw [line width=1.5pt] (4.129343479795006,1.0135940055636865)-- (-4.569066596173626,-7.077517122320275);
\draw [line width=1.5pt] (-4.569066596173626,-7.077517122320275)-- (-1.0789336690833293,4.287406636193568);
\draw [line width=1.5pt] (-1.0789336690833293,4.287406636193568)-- (1.5789690745228253,-7.291193026071779);
\draw [line width=1.5pt] (1.5789690745228253,-7.291193026071779)-- (-6.518277148878337,1.413812630629883);
\draw [line width=1.5pt] (-6.518277148878337,1.413812630629883)-- (4.838035670696454,-2.0736759037515045);
\draw [line width=1.5pt] (4.838035670696454,-2.0736759037515045)-- (-6.749343479795005,-4.733594005563689);
\draw [line width=1pt] (-18.193435620507863,-2.2529336363974632) circle (6.1924828028968335cm);
\draw [line width=1.5pt] (-14.704946335181575,-7.369307916132743)-- (-23.478089222303865,0.9749572498001466);
\draw [line width=1.5pt] (-23.478089222303865,0.9749572498001466)-- (-15.506770161650943,3.326373845392565);
\draw [line width=1.5pt] (-15.506770161650943,3.326373845392565)-- (-18.34654432770126,-8.443523348852539);
\draw [line width=1.5pt] (-18.34654432770126,-8.443523348852539)-- (-24.368590364691084,-2.7158668384521967);
\draw [line width=1.5pt] (-24.368590364691084,-2.7158668384521967)-- (-12.755673311518471,0.7097651898600121);
\draw [line width=1.5pt] (-12.755673311518471,0.7097651898600121)-- (-14.704946335181575,-7.369307916132743);
\draw [shift={(-6.749343479795005,-4.733594005563689)},line width=1.5pt,color=qqwuqq,fill=qqwuqq,fill opacity=0.10000000149011612] (0,0) -- (12.928439829816215:1.0230354110004332) arc (12.928439829816215:42.92843982981624:1.0230354110004332) -- cycle;
\draw [shift={(-14.704946335181575,-7.369307916132743)},line width=1.5pt,color=qqwuqq,fill=qqwuqq,fill opacity=0.10000000149011612] (0,0) -- (76.43524903332079:1.0230354110004332) arc (76.43524903332079:136.43524903332073:1.0230354110004332) -- cycle;

\draw [fill=ududff] (-1.31,-1.86) circle (2.5pt);
\draw[color=uuuuuu] (-1.0405418960672517,-1.1105440941136402) node {$O$};
\draw [fill=xdxdff] (-6.749343479795005,-4.733594005563689) circle (2.5pt);
\draw[color=uuuuuu] (-7.860777969403474,-4.7) node {$P_1$};
\draw [fill=xdxdff] (1.9490665961736235,3.357517122320277) circle (2.5pt);
\draw[color=uuuuuu] (2.1990702387674537,4.5) node {$P_2$};
\draw [fill=uuuuuu] (-1.5410663309166677,-8.007406636193567) circle (2pt);
\draw[color=uuuuuu] (-1.6,-8.8) node {$P_3$};
\draw [fill=uuuuuu] (-4.198969074522828,3.571193026071778) circle (2pt);
\draw[color=uuuuuu] (-5.1,4.5) node {$P_4$};
\draw [fill=uuuuuu] (3.8982771488783365,-5.133812630629879) circle (2pt);
\draw[color=uuuuuu] (5,-5.3) node {$P_5$};
\draw [fill=uuuuuu] (-7.458035670696452,-1.6463240962484975) circle (2pt);
\draw[color=uuuuuu] (-8.35,-1.6220617996138575) node {$P_6$};
\draw [fill=uuuuuu] (4.129343479795006,1.0135940055636865) circle (2pt);
\draw[color=uuuuuu] (5.370480012868796,1.5493479744874907) node {$P_7$};
\draw [fill=uuuuuu] (-4.569066596173626,-7.077517122320275) circle (2pt);
\draw[color=uuuuuu] (-4.5,-7.9) node {$P_8$};
\draw [fill=uuuuuu] (-1.0789336690833293,4.287406636193568) circle (2pt);
\draw[color=uuuuuu] (-0.6,5.4368825362891435) node {$P_9$};
\draw [fill=uuuuuu] (1.5789690745228253,-7.291193026071779) circle (2pt);
\draw[color=uuuuuu] (2.4,-7.8) node {$P_{10}$};
\draw [fill=uuuuuu] (-6.518277148878337,1.413812630629883) circle (2pt);
\draw[color=uuuuuu] (-7.144653181703171,2.5) node {$P_{11}$};
\draw [fill=uuuuuu] (4.838035670696454,-2.0736759037515045) circle (2pt);
\draw[color=uuuuuu] (5.8,-1.8) node {$P_{12}$};
\draw [fill=ududff] (-18.193435620507863,-2.2529336363974632) circle (2.5pt);
\draw[color=uuuuuu] (-17.81832263647436,-1.4174547174137706) node {$O$};
\draw [fill=xdxdff] (-14.704946335181575,-7.369307916132743) circle (2.5pt);

\draw[color=uuuuuu] (-14.2,-8.2) node {$P_{1}$};
\draw [fill=xdxdff] (-23.478089222303865,0.9749572498001466) circle (2.5pt);
\draw[color=uuuuuu] (-24.399850447243814,1.8) node {$P_{2}$};
\draw[color=qqwuqq] (-21.26254185350915,0.25) node {$ 60\textrm{\degre}$};
\draw [fill=uuuuuu] (-15.506770161650943,3.326373845392565) circle (2pt);
\draw[color=uuuuuu] (-14.408204599806247,3.8341270590551293) node {$P_{3}$};
\draw [fill=uuuuuu] (-18.34654432770126,-8.443523348852539) circle (2pt);
\draw[color=uuuuuu] (-18.057030899041127,-9.2) node {$P_{4}$};
\draw [fill=uuuuuu] (-24.368590364691084,-2.7158668384521967) circle (2pt);
\draw[color=uuuuuu] (-25.55929057971097,-2.1) node {$P_{5}$};
\draw [fill=uuuuuu] (-12.755673311518471,0.7097651898600121) circle (2pt);
\draw[color=uuuuuu] (-11.952919613405207,1.4) node {$P_{6}$};
\end{tikzpicture}

\vspace*{-0.8cm}
\caption{Examples of periodic trajectories, $\alpha=\tfrac{\pi}{3}$ (left) and $\alpha=\tfrac{\pi}{6}$ (right).}\label{Fig:periodic-trajectories}
\end{figure}

\begin{proposition}
If $\alpha=\tfrac{p}{q}\pi$ with $q-p$ even (and $p$ and $q$ coprime), then the shape $S$ is a disk.
\end{proposition}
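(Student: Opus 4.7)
The strategy is to generalize the argument sketched above for $\alpha=\pi/3$. To each tangent line $\ell_k = P_k P_{k+1}$ of a trajectory I associate the unit vector $\vec n_k$ from $O$ perpendicular to $\ell_k$ and pointing toward it; the distance from $O$ to $\ell_k$ is then $r\sin\beta_k$, and $\vec n_k$ records on which side of $O$ the line $\ell_k$ lies. The goal is to force $\ell_k = \ell_{k+q}$ as lines, which in turn forces $\beta_1 = \beta_2 = \alpha/2$ and finally that $S$ is a disk.

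First I would establish how $\vec n_k$ evolves from one step to the next. At the vertex $P_{k+1}$, the tangents $\ell_k$ and $\ell_{k+1}$ lie on opposite sides of the radius $OP_{k+1}$, making with it the angles $\beta_k$ and $\beta_{k+1}$ (Observation~\ref{obs-2-periodic}). A short computation in the two right triangles formed by $O$, $P_{k+1}$ and the feet of the perpendiculars shows that $\vec n_{k+1}$ is the image of $\vec n_k$ under a rotation of angle $\pi - (\beta_k+\beta_{k+1}) = \pi - \alpha$. Iterating $q$ times, $\vec n_{k+q}$ is the image of $\vec n_k$ under a rotation of angle $q(\pi - \alpha) = (q-p)\pi$.

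The arithmetic now enters: coprimality of $p,q$ combined with $q-p$ even forces $p$ and $q$ to both be odd, so $(q-p)\pi$ is a multiple of $2\pi$ and $\vec n_{k+q} = \vec n_k$. Hence $\ell_k$ and $\ell_{k+q}$ are parallel and sit on the same side of $O$. As both are tangent to the convex $S$ and $S$ lies on the $O$-side of each, two such parallel tangent lines must coincide. Consequently $\sin\beta_k = \sin\beta_{k+q}$, and since $q$ is odd the indices $k$ and $k+q$ pick out opposite terms of the $2$-periodic sequence $(\beta_k)$, giving $\sin\beta_1 = \sin\beta_2$; combined with $\beta_1+\beta_2 = \alpha \in (0,\pi)$ this forces $\beta_1 = \beta_2 = \alpha/2$.

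The starting point $P_1 \in C$ being arbitrary, every tangent to $S$ meeting $C$ lies at distance $r\sin(\alpha/2)$ from $O$. Since $S$ is contained in the open disk bounded by $C$, this accounts for all tangent lines; the support function of $S$ is therefore the constant $r\sin(\alpha/2)$, and $S$ is the disk of this radius centered at $O$. The delicate point of this plan, and the main obstacle to watch for, is the orientation bookkeeping for $\vec n_k$: having $\ell_k$ and $\ell_{k+q}$ merely parallel is automatic (since $q-p$ is an integer), whereas ``parallel \emph{on the same side} of $O$'' is exactly what the parity condition $q-p$ even provides, and is what convexity then upgrades to equality.
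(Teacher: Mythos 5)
Your proposal is correct and takes essentially the same approach as the paper, which tracks the orthogonal projection $Q_k$ of $O$ onto each tangent line (your unit normal $\vec n_k$), notes it rotates by $\pi-\alpha$ per step so that after $q$ steps the parity of $q-p$ makes $P_kP_{k+1}$ and $P_{k+q}P_{k+q+1}$ parallel on the same side of $O$ and hence equal, and then uses $q$ odd together with the $2$-periodicity of $(\beta_k)$ to force $\beta\equiv\alpha/2$ and conclude $S$ is a disk. Your resolution of the $\sin\beta_1=\sin\beta_2$ ambiguity via $\beta_1+\beta_2=\alpha<\pi$ is in fact slightly more careful than the paper's direct assertion $\beta_k=\beta_{k+q}=\beta_{k+1}$.
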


\begin{proof}
First note that since $p$ and $q$ are coprime and $q-p$ is even, both are odd. Denote by $Q_k$ the orthogonal projection of $O$ onto $P_kP_{k+1}$. The angle between $OQ_k$ and $OQ_{k+1}$ is always $\pi-\alpha = \tfrac{q-p}{q}\pi$. Hence, the angle between $OQ_{k}$ and $OQ_{k+q}$ is $q(\pi-\alpha)=(q-p)\pi$. Since $q-p$ is even, $P_kP_{k+1}$ is parallel to $P_{k+q}P_{k+q+1}$ and both lie on the same side of $O$. These two parallels can only be tangent to the same shape if they coincide: $P_kP_{k+1}=P_{k+q}P_{k+q+1}$. It then follows $\beta_k=\beta_{k+q}=\beta_{k+1}$, since $q$ is odd and $\beta_k$ is 2-periodic. Hence $\beta(P)=\alpha/2$ for all points $P\in C$. By rotational symmetry, $S$ has to be a disk.
\end{proof}

\begin{proposition}
If $\alpha=\tfrac{p}{q}\pi$ with $q-p$ odd (and $p$ and $q$ coprime), then there are infinitely many possible shapes $S$ with $C$ as isoptics $S_\alpha$.
\end{proposition}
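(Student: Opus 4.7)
The plan is to switch from the discrete dynamics to the support function. Write $\rho(\phi)$ for the signed distance from $O$ to the tangent line of a candidate shape $S$ whose outward normal has angle $\phi$; the tangent passes through $P(\theta) = r(\cos\theta,\sin\theta) \in C$ precisely when $r\cos(\theta-\phi) = \rho(\phi)$. Since the two tangent \emph{directions} at any $P \in C$ make angle $\alpha$, the two outward normals differ by $\pi - \alpha$, and requiring both tangents to pass through a common $P(\theta)$, then eliminating $\theta$, yields the functional equation
\[
\rho(\phi + \pi - \alpha) \;=\; -\rho(\phi)\cos\alpha + \sin\alpha \, \sqrt{r^2 - \rho(\phi)^2}.
\]
The natural substitution $\rho(\phi) = r\sin\bigl(\tfrac{\alpha}{2} + g(\phi)\bigr)$, for which $g \equiv 0$ recovers the disk of radius $r\sin(\alpha/2)$, collapses this to the clean antiperiodicity
\[
g(\phi + \pi - \alpha) \;=\; -g(\phi).
\]

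Next I would identify the solution space by combining this antiperiodicity with the $2\pi$-periodicity of $g$ on $S^1$. Iterating gives $g(\phi + 2(\pi - \alpha)) = g(\phi)$; since $\gcd(p,q)=1$, the subgroup of $\R$ generated by $2\pi$ and $2(\pi-\alpha) = 2(q-p)\pi/q$ equals $(2\pi/q)\,\Z$, so $g$ is $(2\pi/q)$-periodic. Reducing the shift $\pi - \alpha = (q-p)\pi/q$ modulo $2\pi/q$, the hypothesis that $q - p$ is odd gives $\pi - \alpha \equiv \pi/q \pmod{2\pi/q}$, so the only remaining constraint becomes
\[
g(\phi + \pi/q) \;=\; -g(\phi).
\]
Conversely any such $g$ is automatically $(2\pi/q)$-periodic and so solves the original equation. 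Prescribing $g$ arbitrarily on the arc $[0, \pi/q)$ and extending by antiperiodicity thus yields an infinite-dimensional family of admissible candidates.

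Finally I would check that small admissible $g$ give genuine strictly convex shapes strictly inside $C$. The explicit one-parameter family $g_\varepsilon(\phi) = \varepsilon\cos(q\phi)$ satisfies the constraint since $\cos(q\phi + \pi) = -\cos(q\phi)$, and for $|\varepsilon|$ small the support function $\rho_\varepsilon$ is $C^2$-close to the constant $r\sin(\alpha/2)$, so both the convexity inequality $\rho_\varepsilon + \rho_\varepsilon'' > 0$ and the containment $\rho_\varepsilon < r$ hold by continuity; distinct $\varepsilon$ give distinct shapes, producing uncountably many non-disk solutions. The step I expect to be the main obstacle is the careful derivation of the functional equation — tracking signs and branches of $\arccos$ to make sure that what emerges really is the angle-of-sight condition at \emph{every} point of $C$, not just at a discrete orbit of $T$ — since this is the point at which the continuous reformulation has to be reconciled with the discrete dynamics of Section \ref{Sec:dyn-viewpt}; once the equation is in place, restricting to small perturbations of the disk makes the remaining convexity check routine.
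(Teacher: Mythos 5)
Your proposal is correct, but it takes a genuinely different route from the paper. The paper stays with the discrete dynamics: it perturbs the angle function $\beta$ by a small $\theta$ on one arc of the regular $2q$-gon, propagates $\beta$ around $C$ using the $2$-periodicity of Observation \ref{obs-2-periodic}, and then explicitly \emph{admits} (citing \cite{green}) the key analytic fact that the resulting family of tangent lines envelops a convex shape. You instead work on the dual side with the support function, which is essentially Green's original method, mentioned but deliberately not presented in the paper. The two arguments are structurally parallel: your substitution $\rho=r\sin\bigl(\tfrac{\alpha}{2}+g(\phi)\bigr)$ is the paper's $\beta=\tfrac{\alpha}{2}+\theta$ read through the relation $\rho=r\sin\beta$ (the distance from $O$ to a tangent line); your antiperiodicity $g(\phi+\pi-\alpha)=-g(\phi)$ is Observation \ref{obs-2-periodic}; your reduction to $g(\phi+\pi/q)=-g(\phi)$, using $\gcd(q-p,q)=1$ and $q-p$ odd, recovers the paper's extension rule $\beta(P)=-\theta(P')+\alpha/2$ under rotation by $\pi/q$; and $g_\varepsilon(\phi)=\varepsilon\cos(q\phi)$ is exactly the paper's analytic example $\theta(t)=k\sin(qt)$ up to a phase. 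What your route buys is completeness: the envelope-convexity step the paper leaves unproven becomes the routine support-function check $\rho_\varepsilon+\rho_\varepsilon''>0$ together with $\rho_\varepsilon<r$, so your proof is self-contained where the paper's is not, and it moreover characterizes the full solution space near the disk rather than just exhibiting perturbations. Two small caveats. First, the branch issues you flag are only needed in the sufficiency direction here, where they reduce to the identity $-\sin(\tfrac{\alpha}{2}+g)\cos\alpha+\sin\alpha\cos(\tfrac{\alpha}{2}+g)=\sin(\tfrac{\alpha}{2}-g)$ together with $\cos(\tfrac{\alpha}{2}+g)>0$, valid once $|g|<\tfrac{\pi}{2}-\tfrac{\alpha}{2}$; and the condition at \emph{every} point of $C$ (not just along one orbit) follows because $\phi\mapsto\theta(\phi)=\phi+\arccos(\rho(\phi)/r)$ is a degree-one circle map, hence surjective, while the antiperiodicity of $g$ makes the equation for the second branch $\phi\mapsto\phi-(\pi-\alpha)$ hold automatically. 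Second, ``prescribing $g$ arbitrarily on $[0,\pi/q)$'' requires matching derivatives at the junction for $g$ to be $\mathcal{C}^2$ (the analogue of the paper's conditions $\theta''(P_1)=\theta''(P_2)=0$); your explicit analytic family sidesteps this and already yields the uncountably many non-disk shapes needed, so the proposition is fully established.
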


\begin{proof}
Start from the special case $p=q-1$. Then the trajectory of the dynamical system is a regular $2q$-gon when the initial condition is $\beta(P)=\alpha/2$.

\begin{figure}[h!]
\definecolor{uuuuuu}{rgb}{0.26666666666666666,0.26666666666666666,0.26666666666666666}
\definecolor{qqwuqq}{rgb}{0,0.39215686274509803,0}
\definecolor{xdxdff}{rgb}{0.49019607843137253,0.49019607843137253,1}
\definecolor{ffqqqq}{rgb}{1,0,0}

\begin{tikzpicture}[line cap=round,line join=round,>=triangle 45,x=1cm,y=1cm, scale=0.6]
\draw[line width=1.5pt,color=qqwuqq,fill=qqwuqq,fill opacity=0.10000000149011612] (0.8478946097446765,4.213495356293595) -- (1.1132932273067249,3.9595394962390795) -- (1.3672490873612402,4.224938113801128) -- (1.1018504697991918,4.4788939738556435) -- cycle; 
\draw [shift={(-2.024847300228368,-3.013540432951146)},line width=1.5pt,color=qqwuqq,fill=qqwuqq,fill opacity=0.10000000149011612] (0,0) -- (42.70306033083914:0.8) arc (42.70306033083914:92.66855782966967:0.8) -- cycle;
\draw [rotate around={-2.915322116642519:(1.2572519480519466,0.015418181818185746)},line width=1.5pt,color=ffqqqq] (1.2572519480519466,0.015418181818185746) ellipse (3.424413949238913cm and 2.867081245762928cm);
\draw [line width=1pt] (1.2572519480519464,0.015418181818185744) circle (4.466180220898789cm);
\draw [line width=1.5pt] (1.1018504697991918,4.4788939738556435)-- (5.723240854016704,0.05675650964206991);
\draw [line width=1.5pt] (5.723240854016704,0.05675650964206991)-- (1.4126534263047021,-4.448057610219272);
\draw [line width=1.5pt] (1.4126534263047021,-4.448057610219272)-- (-3.2087369579128096,-0.025920146005698752);
\draw [line width=1.5pt] (-3.2087369579128096,-0.025920146005698752)-- (1.1018504697991918,4.4788939738556435);
\draw [line width=1.5pt] (-2.024847300228368,-3.013540432951146)-- (-2.292358628955975,2.7259588137108426);
\draw [line width=1.5pt] (-2.292358628955975,2.7259588137108426)-- (4.539351196332261,3.0443767965875175);
\draw [line width=1.5pt] (4.539351196332261,3.0443767965875175)-- (4.806862525059868,-2.695122450074471);
\draw [line width=1.5pt] (4.806862525059868,-2.695122450074471)-- (-2.024847300228368,-3.013540432951146);
\draw [line width=1.5pt] (-2.024847300228368,-3.013540432951146)-- (1.2572519480519468,0.015418181818185571);
\draw (-1.8076831168831167,-0.3222441558441497) node[anchor=north west] {$\beta(P)$};
\draw [fill=xdxdff] (1.1018504697991918,4.4788939738556435) circle (2.5pt);
\draw[color=uuuuuu] (1.4390701298701287,5.253513419913422) node {$P_3$};
\draw [fill=uuuuuu] (5.723240854016704,0.05675650964206991) circle (2pt);
\draw[color=uuuuuu] (6.270238961038958,0.30113246753247325) node {$P_4$};
\draw [fill=uuuuuu] (-3.2087369579128096,-0.025920146005698752) circle (2pt);
\draw[color=uuuuuu] (-3.859631168831168,0.11065627705628289) node {$P_2$};
\draw [fill=uuuuuu] (1.4126534263047021,-4.448057610219272) circle (2pt);
\draw[color=uuuuuu] (1.8,-5) node {$P_1$};
\draw [fill=xdxdff] (-2.024847300228368,-3.013540432951146) circle (2.5pt);
\draw[color=uuuuuu] (-2.6561679653679646,-3.058174891774884) node {$P$};
\draw [fill=uuuuuu] (1.2572519480519468,0.015418181818185571) circle (2pt);
\draw[color=uuuuuu] (1.4304121212121204,0.43966060606061164) node {$O$};
\end{tikzpicture}

\caption{Construction of non-trivial constant angle shapes}\label{Fig:proof}
\end{figure}
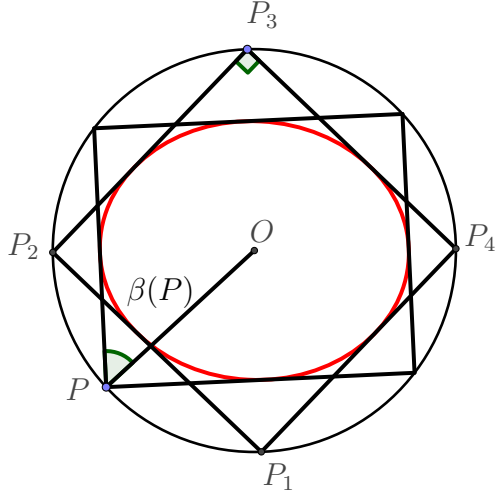

Consider a regular $2q$-gon inscribed in the circle $C$, with endpoints $P_1, P_2, ..., P_{2q}$. 
Let $\theta: \widehat{P_1P_2}\to \mathbb{R}$ be a smooth function on the arc $\widehat{P_1P_2}$ such that $\theta(P_1)=\theta(P_2)=0$ and $\theta$ is $\mathcal{C}^2$-close to the zero-function (i.e. the supremum norm of the values of $\theta, \theta'$ and $\theta''$ are small). For $P$ on the arc $\widehat{P_1P_2}$, take $\beta(P)=\theta(P)+\alpha/2$.

The function $\beta(P)$ can then be extended to all of $C$ using the dynamical system. 
On the second arc $P\in\widehat{P_2P_3}$ it is given by $\beta(P)=-\theta(P')+\alpha/2$, where $P'\in\widehat{P_1P_2}$ is the image of $P$ under the rotation around the origin of angle $-\pi/q$. 
By Observation \ref{obs-2-periodic}, the function $\beta$ stays invariant under rotation around the origin by an angle of $2\pi/n$. Hence we know it everywhere on $C$.

In particular, we see that $\beta$ stays $\mathcal{C}^2$-close to the zero function in the interior of all arcs $\widehat{P_kP_{k+1}}$. The only concern is that $\beta$ might not be $\mathcal{C}^2$ at the junction. To remedy this, we impose $\theta''(P_1)=\theta''(P_2)=0$ on the initial arc $\widehat{P_1P_2}$, making the function $\beta$ defined on all of $C$ of class $\mathcal{C}^2$.

The union of all trajectorires of the dynamical system with initial condition $(P, \beta(P))$, where $P$ varies on $\widehat{P_1P_2}$, envelop a convex shape $S$. This is true for $\theta$ identically 0 (where $S$ is a disk), and it stays true for a smooth variation which is $\mathcal{C}^2$-close to the zero function. We admit this result which can be proven using support functions (see \cite{green}).

In the general case, for $p\neq q-1$, the trajectory of the dynamical system with initial condition $\beta(P)=\alpha/2$ is a star-shaped regular $2q$-gon. The same argument as above works, this time applied to the arc $\widehat{P_1P_k}$, where $P_k$ is one of the closest points to $P_1$.
\end{proof}

\begin{Remark}
There always exists a trajectory of the dynamical system which is a (potentially star-shaped) regular $2q$-gon. This follows from the intermediate value theorem, applied to the function $\beta(P)-\alpha/2$.
\end{Remark}

To finish, we give a class of examples, where each curve is analytic.

\begin{example}
Consider the unit circle $C$ and define $\theta(t)=k\sin(qt)$ for $t\in [0,\pi/q]$ and $k\in\mathbb{R}$ a sufficiently small constant. Then $\theta$ is $\mathcal{C}^2$-close to the zero function. Its extension to $C$ is still given by the same formula (for $t\in[0,2\pi]$), since it is $2\pi/q$-periodic and satisfies $\theta(t+\pi/q)=-\theta(t)$.

Figure \ref{Fig-examples} shows two examples, produced using SageMath. This class of examples appeared in Green's work \cite{green} using the support function. This allows in particular to write a parametric equations for the curve, showing the analyticity.

\begin{figure}[h!]
\centering
\includegraphics[height=5cm]{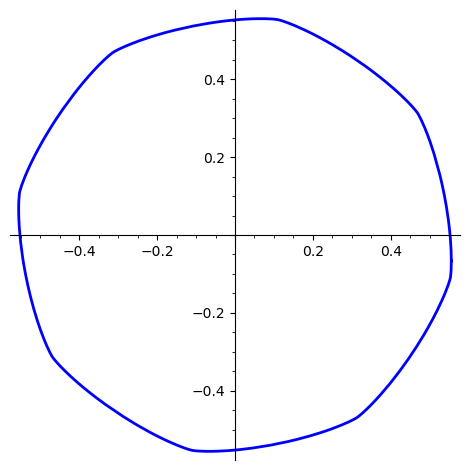} \hspace{2cm}
\includegraphics[height=5cm]{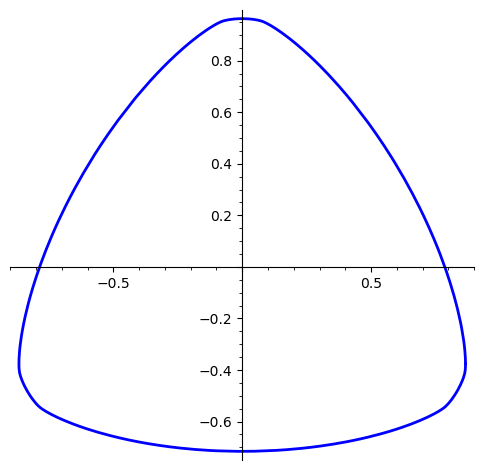}

\caption{Examples of analytic ``constant angle'' shapes. The parameters are $p=3,q=8,k=0.01$ (left) and $p=2,q=3,k=0.25$ (right).}\label{Fig-examples}
\end{figure}
\end{example}

\section{A strange billiards}\label{Sec:strange-billiards}

The dynamical viewpoint in the proof above can be generalized, leading to a dynamical system, which we call the \emph{outer constant angle billiards}.

Fix a convex compact set $C\subset \R^2$. A configuration of the dynamical system is a point $P$ outside of $C$ and a unit vector $v$ generating a tangent ray from $P$ to $C$. Let $\alpha$ be the angle of sight of $P$ to $C$. Then there is a unique point $P'$ on the tangent ray generated by $v$ such that the angle of sight at $P'$ to $C$ is $\alpha$. The dynamical map is $(P,v)\mapsto (P',v')$, where $v'$ is a unit vector generating a tangent ray disjoint from the tangent ray generated by $v$.

Equivalently, we can consider the isoptics $C_\alpha$ of $C$ with angle $\alpha$. Then $P'$ is the second intersection point of the tangent ray generated by $v$ with the isoptics $C_\alpha$.
Note that the interior of $C_\alpha$ is not necessarily convex, but it is star-shaped with respect to all points of $C$.

The outer constant angle billiards on $C$ is then equivalent to an \emph{inner constant angle billiards} on $C_\alpha$, by which we mean a piecewise straight trajectory inside of $C_\alpha$ which changes direction on $C_\alpha$ by an angle $\alpha$.

\begin{proposition}
The outer constant angle billiards is periodic if and only if $\alpha$ is a rational multiple of $\pi$.
\end{proposition}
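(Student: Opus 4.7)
The plan is to track the sequence of tangent lines $\ell_0, \ell_1, \ldots$ visited by an orbit via the angle $\psi \in \R/2\pi\Z$ of the outward normal to $C$ at the corresponding tangency point, and to show that this angle advances by the constant $\pi - \alpha$ at every bounce. Periodicity of the orbit will then be equivalent to the periodicity of an arithmetic progression on $S^1$, which holds iff $\alpha \in \pi\Q$.

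The main input is the following angle lemma, which I would prove first (assuming $C$ smooth and strictly convex; the general case reduces by approximation or via the support function): for every point $P$ outside $C$ with angle of sight $\alpha$, the two tangent points $T_-, T_+$ from $P$ have outward normals at angles $\psi_-, \psi_+$ satisfying
\[
\psi_+ - \psi_- \equiv \pi - \alpha \pmod{2\pi}.
\]
I would prove this by a short angle chase in the triangle $PT_-T_+$: its interior angle at $P$ equals $\alpha$ by definition of the angle of sight, while the tangency conditions (the segments $PT_\pm$ are perpendicular to the normals at $T_\pm$) force this same angle to be $\pi - (\psi_+ - \psi_-)$. Equivalently, the arc of $\partial C$ facing $P$ has Gauss image of angular length $\pi - \alpha$. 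In the special case where $C$ is a circle, this is consistent with the content of Section \ref{Sec:dyn-viewpt}, where $T^2$ on the isoptic is seen to be a rotation by $2\alpha$.

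Applied at each bounce point $P_{k+1}$, whose two tangents to $C$ are exactly $\ell_k$ and $\ell_{k+1}$, the lemma gives $\psi_{k+1} - \psi_k \equiv \pi - \alpha \pmod{2\pi}$, with a sign fixed once and for all by the winding direction of the orbit around $C$. Iterating yields $\psi_k \equiv \psi_0 + k(\pi - \alpha) \pmod{2\pi}$, which is periodic iff $n(\pi - \alpha) \in 2\pi\Z$ for some $n \geq 1$, iff $\alpha$ is a rational multiple of $\pi$. Since each bounce point is recovered as the intersection $P_k = \ell_{k-1} \cap \ell_k$ of two consecutive tangent lines, periodicity of the sequence $(\psi_k)$ is equivalent to periodicity of the full orbit, which concludes the proof.

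The main obstacle I anticipate is the angle lemma itself and, more subtly, the verification that the sign of $\psi_{k+1} - \psi_k$ is genuinely constant throughout the orbit, so that the shifts accumulate rather than cancel. For smooth strictly convex $C$ this reduces to a careful one-time bookkeeping once an orientation is fixed, since the dynamics winds monotonically around $C$. For general convex $C$ the normal angle is multi-valued at corners and one should phrase the argument in terms of supporting half-planes, but the reasoning is otherwise unchanged.
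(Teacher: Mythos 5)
Your argument is correct, and its engine is the same as the paper's: a direction datum attached to the orbit advances by a fixed angle at each bounce, reducing periodicity of the billiards to periodicity of a circle rotation, hence to $\alpha\in\pi\Q$. Where you differ is in the choice of datum and in how the rational case is closed. The paper tracks the slopes of the tangent rays themselves, which are defined for an arbitrary convex compact $C$; your Gauss-normal angle is the same quantity rotated by $\pi/2$ when $C$ is smooth and strictly convex, but it is multivalued at corners, which is why you need the supporting-half-plane fallback (note that the outward normal direction always determines the supporting \emph{line}, even when the tangency point does not determine the normal, so that fallback is painless and your recovery $P_k=\ell_{k-1}\cap\ell_k$ goes through unchanged). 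For the rational case the paper argues by pigeonhole instead of exact computation: only finitely many slopes occur, so infinitely many tangent rays share one, and since a convex body has exactly two tangent lines of a given slope, some directed tangent ray --- hence by determinism some state --- recurs; this avoids ever computing the increment or imposing regularity on $C$. Your explicit value $\pi-\alpha$ buys more: it identifies the rotation number and hence the exact period, and it is consistent with $T^2$ being a rotation by $2\alpha$ in Section \ref{Sec:dyn-viewpt}, since $2(\pi-\alpha)\equiv -2\alpha \pmod{2\pi}$, at the cost of the angle lemma and the smoothness hypotheses. Finally, you are right that the sign-consistency check is not optional in either version: were the increment allowed to alternate in sign, directions could recur even for irrational $\alpha$, so your justification via the monotone winding forced by the rule that $v'$ spans the tangent ray disjoint from that of $v$ is exactly what is needed --- a point the paper's proof leaves implicit.
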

\begin{proof}
If $\alpha$ is an irrational multiple of $\pi$, then the slopes of the tangent rays of the dynamical systems are all different, hence it cannot be periodic. If $\alpha$ is a rational multiple of $\pi$, then we get infinitely many tangent rays with the same slope. Since there are exactly two tangents with given slope to $C$, the system has to be periodic.
\end{proof}

Exploring this dynamical system would be an interesting task, notably studying  periodic orbits, unique ergodicity or phenomena like the Poncelet porism (see for instance Lecture 29 in \cite{fuchs2007mathematical}).

\section{Opening}\label{Sec:opening}

Many interesting questions can be asked around isoptics. For example:
\begin{question}
Are there convex shapes which have exactly two circular isoptics? These two circles have to be concentric?
\end{question}

\begin{question}\label{Q2}
For which closed convex set $S$ there is an isoptics which is a straight line?
\end{question}

It is not difficult to show that such a shape is unbounded (hence $\alpha\geq \pi/2$), and is the graph of a function over the straight line isoptics. The simplest example is a parabola and $\alpha=\pi/2$ (the orthoptic circle degenerates into a line). There are examples for all $\alpha \in [\pi/2,\pi)$, but a full classification of all shapes seems to be unknown.

\medskip
Our original Question \ref{Q1} and related questions like \ref{Q2} constituted the first problem of the first ETEAM, a new research-oriented team competition for high school students which took place in July 2024 at the University of Luxembourg. For more information about the competition, see \href{https://eteam.tfjm.org/}{https://eteam.tfjm.org/}. If you have ideas for problems please contact the scientific organization comitee.

To finish, I give the full problem \emph{Exploring Flatland} as it was stated in the ETEAM 2024, very close to my initial suggestions. Good luck!

\medskip
\itshape
In Flatland the reality takes place inside the plane. The objects in Flatland are convex shapes.
The Flatlanders have each one punctual eye, which is capable of measuring angles.

At a point $P$ outside of an object $\mathcal{S}$, there are exactly two \emph{tangent rays} from $P$ to $\mathcal{S}$ (the rays through $P$ intersecting the boundary of $\mathcal{S}$ without intersecting its interior). The angle between the two tangent rays is called the \emph{angle of sight} at $P$.
The eye of a Flatlander allows one to determine the angle of sight.

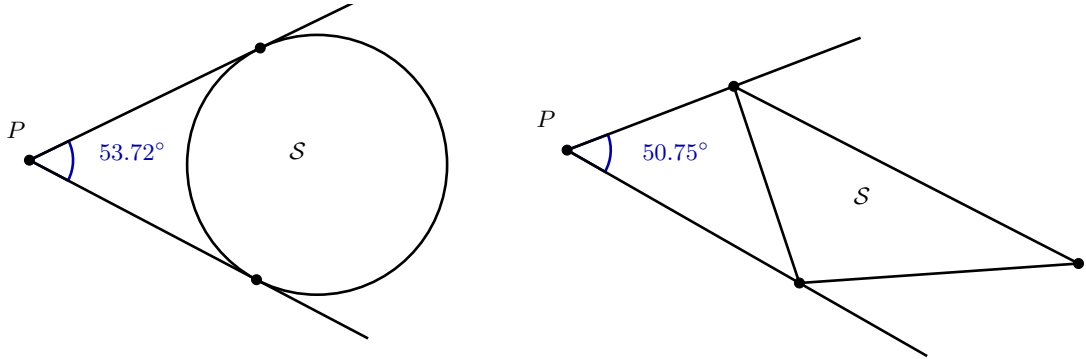
\begin{figure}[ht]
    \centering
 	\definecolor{qqwuqq}{rgb}{0,0.39215686274509803,0}
\begin{tikzpicture}[scale=0.75][line cap=round,line join=round,>=triangle 45,x=1cm,y=1cm]
\clip(-5.907689870816927,-2.5) rectangle (16.7346651039,4.);
\draw [shift={(-4.759630183997679,1.2419409591069186)},line width=1pt,color=blue!70!black] (0,0) -- (-27.77910700789548:0.7736567759020533) arc (-27.77910700789548:25.93800599598098:0.7736567759020533) -- cycle;
\draw [shift={(4.770909672580966,1.4206370338651853)},line width=1pt,color=blue!70!black] (0,0) -- (-29.7919608093646:0.7736567759020533) arc (-29.7919608093646:20.955776730632138:0.7736567759020533) -- cycle;
\draw [line width=1pt] (0.34,1.16) circle (2.30425693cm);
\draw [line width=1pt] (-4.7596301,1.241940959)-- (1.1733325,4.12769785);
\draw [line width=1pt] (-4.7596301,1.241940)-- (1.24308628,-1.920135);
\draw [line width=1pt] (7.725902,2.552336)-- (8.889038,-0.9370702);
\draw [line width=1pt] (8.889038,-0.937070)-- (13.8364158,-0.5912731);
\draw [line width=1pt] (13.83641,-0.591273)-- (7.72590278,2.552336);
\draw [line width=1pt] (4.770909,1.420637)-- (9.9737430,3.4132115);
\draw [line width=1pt] (4.77090,1.4206370)-- (11.152689,-2.233053);
\begin{scriptsize}
\draw[color=black] (0.,1.4) node {$\mathcal{S}$};
\draw[color=black] (10.,0.6) node {$\mathcal{S}$};
\draw [fill=black] (-4.759630183997679,1.2419409591069186) circle (2.5pt);
\draw[color=black] (-5.,1.7859264908573265) node {$P$};
\draw [fill=black] (-0.6678782844606234,3.23214414646103) circle (2.5pt); 
\draw [fill=black] (-0.7339313068542868,-0.8786935885900662) circle (2.5pt); 
\draw [fill=black] (8.88903837620901,-0.9370702392042988) circle (2.5pt); 
\draw [fill=black] (13.83641580752044,-0.591273172487441) circle (2.5pt); 
\draw [fill=black] (7.7259027881613935,2.552336524938538) circle (2.5pt); 
\draw [fill=black] (4.770909672580966,1.4206370338651853) circle (2.5pt);
\draw[color=black] (4.4,1.9664464052344717) node {$P$};
\draw[color=blue!70!black] (-2.93,1.4) node {$53.72^{\circ}$};
\draw[color=blue!70!black] (6.7,1.4) node {$50.75^{\circ}$};
\end{scriptsize}
\end{tikzpicture}
    \caption{Example of two objects with angle of sight.}
\end{figure}

\q In the case where $\mathcal{S}$ is a disk with center at the origin and radius 1, what is the angle of sight at $P=(x,y)$ (outside of $\mathcal{S}$)?

\medskip
Edwin is a Flatlander who dislikes changing sight. Whenever he looks at an object, he goes around it by keeping the same angle of sight.

\q Determine the path Edwin must take to go around a square. Describe the path for any convex polygon.

\medskip
Emily is a Flatlander and an explorer. She wants to know the exact shape of the objects she encounters. 
To this end, Emily has a device which allows to determine tangent rays.

\q With a finite number of measurements, is it always possible for Emily to determine the exact shape
\begin{enumerate}
    \item of an arbitrary object?

    \item of an $n$-gon for some fixed $n\geq 3$ (which is known to Emily)? If yes, give a bound for how many measurements she has to do. One can start with the case of a triangle.
\end{enumerate}

\medskip
One day, Edwin and Emily walk together on a circle $C$, focusing on a bounded shape $\mathcal{S}$ under a constant angle. Emily forgot to take her device to determine tangent rays. She wonders whether $\mathcal{S}$ has to be a disk.

\q Under which conditions on the angle of sight $\alpha$, Emily can deduce that the shape is a disk? One can start by treating the following cases:

\begin{enumerate}
    \item $\alpha$ is an irrational multiple of $\pi$.

    \item $\alpha=\pi/3$.

    \item $\alpha$ is a rational multiple of $\pi$.
\end{enumerate}

\medskip
Naturally Edwin likes shapes where he can walk on a straight line $\ell$, while keeping the same angle of sight.

\q For given straight line $\ell$ and angle of sight $\alpha$, describe the space of all such shapes $\mathcal{S}$. In particular, consider the following cases:
\begin{enumerate} 
    \item $\mathcal{S}$ is bounded (i.e. of finite size).

    \item $\mathcal{S}$ is unbounded. One can start by considering $\alpha<\pi/2, \alpha=\pi/2$ and then $\alpha>\pi/2$. Are there examples where $\mathcal{S}$ has smooth boundary?
\end{enumerate}

\q Suggest and study further directions of research, for example in 3 dimensions.

\bibliographystyle{plain}
\bibliography{ref}

\end{document}